\documentclass[11pt]{article}
\usepackage[margin=1.1in]{geometry}
\usepackage{amsmath,amssymb,amsthm}
\usepackage{booktabs}
\usepackage[colorlinks=true,linkcolor=blue,citecolor=blue]{hyperref}

\newtheorem{theorem}{Theorem}[section]
\newtheorem{lemma}[theorem]{Lemma}
\newtheorem{proposition}[theorem]{Proposition}

\theoremstyle{definition}

\newtheorem{remark}[theorem]{Remark}
\newtheorem{question}[theorem]{Question}

\newcommand{\LR}{\mathrm{LR}}
\newcommand{\Z}{\mathbb{Z}}
\newcommand{\norm}[1]{\left\lVert #1 \right\rVert}

\title{Single-speed modifications of the tight Lonely Runner instance:\\
an effective bound and the complete classification for $r=2$}
\author{Yuhan Zhang\thanks{Yangzhou University, Yangzhou, China.
\texttt{qinray@hotmail.com}. ORCID \texttt{0009-0000-2769-467X}.}}
\date{July 2026}

\begin{document}
\maketitle

\begin{center}
\small
\textbf{2020 Mathematics Subject Classification.} 11J71 (primary);
11B75, 05C15, 52C07 (secondary).\\
\textbf{Keywords.} Lonely Runner Conjecture, tight instance, Diophantine
approximation, view-obstruction, exact rational arithmetic.
\end{center}

\begin{abstract}
For a set $V$ of $n-1$ distinct positive integers write
$\LR(V)=\max_t\min_{v\in V}\norm{vt}$, where $\norm{x}$ is the distance from
$x$ to the nearest integer; $V$ is \emph{tight} if $\LR(V)=1/n$, the value
predicted by the Lonely Runner Conjecture. Goddyn and Wong
(\emph{Integers} \textbf{6} (2006), \#A38) classified the tight sets obtained
from the baseline $[n-1]=\{1,\dots,n-1\}$ by replacing one speed $r$ with a
\emph{multiple} $mr$, and proved that for a fixed $r$ only finitely many
non-multiple replacements can be tight, remarking that this ``partially
explains'' why the two sporadic tight sets $\{1,3,4,7\}$ and $\{1,3,4,5,9\}$,
in which the speed $2$ is replaced by an odd number, have no analogues.

We make their finiteness effective and settle the case they singled out. Let
$U(n,r)=\{t:\norm{it}>1/n \text{ for all } i\in[n-1]\setminus\{r\}\}$ be the
region left uncovered when speed $r$ is deleted. We compute the length of
every connected component of $U(n,r)$ exactly, in both regimes $2r>n-1$ and
$2r\le n-1$, in terms of the arithmetic quantity
$I(n,r)=\min_{u,\pm}\max\{i\le n-1,\ i\neq r:\ i\equiv\pm u \bmod r\}$, the minimum
being over units $u$ modulo $r$ and over both signs. Since a connected set on which the inserted
speed $w$ must stay $1/n$-close to $\Z$ cannot be longer than $2/(wn)$, this
yields the explicit necessary bound $w\le 4rI/(2s-I)$ with $s=n-r$, and hence:
\emph{if $([n-1]\setminus\{r\})\cup\{w\}$ is tight with $2r\le n-1$, then
$n\le 6r$.} This improves the constant implicit in Goddyn and Wong's
finiteness theorem from $12$ to $6$ and makes the classification of
single-speed modifications a finite computation for each $n$. Carrying the
computation out for $r=2$ we obtain the complete classification:
$([n-1]\setminus\{2\})\cup\{w\}$ with $w>n-1$ is tight if and only if
$(n,w)=(5,7)$ or $(6,9)$; the same method disposes of $r=3$ entirely.

We also report exhaustive censuses in exact rational arithmetic: all tight
single-speed modifications for $n\le45$ and $w\le10n$ (eleven sets), all tight
two-speed modifications for $n\le34$ and inserted speeds at most $4n$ (one
set, $\{1,4,5,6,7,11,13\}$), and all tight sets of any shape with speeds below
$2n$ for $n\le16$. Finally we note that the natural guess that tight sets have
all speeds below $2n$ is false, a counterexample being the Goddyn--Wong set
$\{1,\dots,29,31,90\}$ with $n=32$.
\end{abstract}

\section{Introduction}

Let $\norm{x}$ be the distance from $x\in\mathbb R$ to the nearest integer. For
a set $V$ of $k=n-1$ distinct positive integers (\emph{speeds}) put
\[
\LR(V)=\max_{t\in[0,1)}\ \min_{v\in V}\norm{vt}.
\]
The \emph{Lonely Runner Conjecture} (LRC), due to Wills \cite{Wills1967} and,
in view-obstruction form, to Cusick \cite{Cusick1973}, asserts
$\LR(V)\ge\frac1n$. It is proved for $n\le7$ runners
\cite{BetkeWills1972,CusickPomerance1984,BieniaEtAl1998,BohmanHolzmanKleitman2001,BarajasSerra2008}
and announced up to $n\le13$ by computer-assisted sieve methods
\cite{Rosenfeld2025eight,Trakulthongchai2025,SungkawichaiTrakulthongchai2026};
see \cite{PerarnauSerra2025} for a survey and \cite{BohmanPeng2022,Tao2018} for
results on restricted speed ranges. Since $\LR$ is invariant under dilation we
always assume $\gcd(V)=1$.

By Dirichlet's approximation theorem the \emph{baseline}
$[n-1]=\{1,\dots,n-1\}$ satisfies $\LR([n-1])=\frac1n$; a set with
$\LR(V)=\frac1n$ is called \emph{tight}. Understanding tight sets is one of the
main open problems in the area \cite{PerarnauSerra2025}; Kravitz
\cite{Kravitz2021} and Giri--Kravitz \cite{GiriKravitz2026} study the
associated spectra, the latter proving that (conditionally on LRC) there are
finitely many tight sets for each $n$. The known tight sets besides the
baseline are
\begin{equation}\label{eq:known}
\{1,3,4,7\},\quad \{1,3,4,5,9\},\quad \{1,2,3,4,5,7,12\},\quad
\{1,4,5,6,7,11,13\},
\end{equation}
found by Flor and Wills, together with the families of Goddyn and Wong
\cite{GoddynWong2006}.

\subsection{The results of Goddyn and Wong}
Write $[n-1]_{r\mapsto w}=([n-1]\setminus\{r\})\cup\{w\}$, and $s=n-r$.

\begin{theorem}[\cite{GoddynWong2006}, Thm.~2.3]\label{thm:GW23}
For $n\ge2$, $r\in[n-1]$, $m\ge1$ we have $\LR([n-1]_{r\mapsto mr})\ge\frac1n$,
with equality iff $n=2$, or $(n,r,m)=(3,1,4)$, or $\gcd(r,b)>1$ for every
$b\in\{s,s+1,\dots,ms-1\}$.
\end{theorem}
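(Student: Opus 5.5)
We would prove Theorem~\ref{thm:GW23} by analysing the uncovered region $U=U(n,r)=\{t:\norm{it}>1/n\ \forall i\in[n-1]\setminus\{r\}\}$ and asking whether the speed $mr$ can cover it. Write $V'=[n-1]_{r\mapsto mr}$. Then $\LR(V')>1/n$ exactly when some $t\in U$ has $\norm{mrt}>1/n$.

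\textbf{Lower bound and the structure of $U$.} The open arcs $(a/b,(a+1)/b)$ between consecutive Farey fractions of order $n-1$ survive the deletion of $r$ only near fractions whose denominator is a multiple of $r$. Concretely, $U$ is the union of open intervals around points $a/b$ where every denominator dividing into the neighbourhood is a multiple of $r$. The key observation is that the classical Dirichlet argument gives $\max_t\min_{i\in[n-1]}\norm{it}=1/n$, attained at $t=a/n$. At such a point $\norm{rt}$ is large unless $\gcd$ conditions interfere, and the same holds for $\norm{mrt}$. We would show $\LR(V')\ge 1/n$ by exhibiting $t=a/n$ with $\gcd(a,n)=1$: there all $\norm{it}\ge 1/n$ for $i<n$, and $\norm{mr a/n}\ge 1/n$ unless $n\mid mra$, which would force $n\mid mr$. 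That case needs a separate choice of $t$ slightly perturbed, or the point $a/(n+1)$ type argument.

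\textbf{Characterising equality.} Strict inequality holds iff some $t$ near a point of $U$ has all of $it$ ($i\ne r$) and $mrt$ far from integers. Near the deleted speed $r$, the components of $U$ are small intervals centred at $c/r$-type rationals, precisely at $t=k/(rb')$-style points. We would parametrise $U$ by $t=x/b$ with $b\in\{s,\dots\}$, using that $\norm{it}>1/n$ for all $i\ne r$ forces $t$ to lie within $1/(n r)$-scale of a fraction $a/(r\ell)$. The condition $\norm{mrt}>1/n$ then fails everywhere iff each candidate $t$ has $mrt$ near an integer, which translates into: for every $b$ with $s\le b\le ms-1$ the fraction with denominator $b$ reduces, i.e.\ $\gcd(r,b)>1$. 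Conversely a $b$ in that range coprime to $r$ gives an explicit $t=u/(rb)$-type witness with $\norm{mrt}>1/n$. The small exceptions $n=2$ and $(3,1,4)$ would be checked by hand, since there the interval counting degenerates.

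\textbf{Main obstacle.} The hard step is the exact description of the components of $U(n,r)$ and the translation of ``$mr$ covers every component'' into the gcd condition over the full window $b\in[s,ms-1]$. We would try first to prove that each component is an interval around $j/(r)$-adjacent points of length governed by the nearest non-multiple-of-$r$ denominators, then compute where the multiples of $1/(mr)$ fall. The endpoints $s$ and $ms-1$ should emerge from the first and last denominators for which the $1/n$-neighbourhood of $mr\,t$ misses the component.
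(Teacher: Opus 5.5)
The paper quotes this theorem from Goddyn--Wong rather than proving it, so I judge your sketch on its own terms. Your framing is right: $\LR(V')>\frac1n$ iff $\norm{mrt}>\frac1n$ somewhere on $U(n,r)$. But the proposal stops where the proof begins, as your last paragraph concedes, and its concrete claims fail.

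\textbf{Lower bound.} You leave open the case $n\mid mr$, and it occurs. For $n=4$, $r=m=2$, $V'=\{1,3,4\}$, every $t=a/4$ has $\norm{4t}=0$. Moving off $\frac14$ or $\frac34$ in either direction pushes $\norm{t}$ or $\norm{3t}$ below $\frac14$, and at $a/(n+1)$ the baseline only guarantees $\frac1{n+1}$. The actual witnesses, e.g.\ $t\in(\frac5{12},\frac7{16})$, sit near $p/r=\frac12$.

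\textbf{Structure of $U$.} By Section~3 the components of $U$ lie within $\frac1{rn}$ of reduced fractions $p/r$, not near fractions $x/b$, $a/(r\ell)$ or $k/(rb')$. Also, the $b$ of the criterion is not a denominator of a witness. For $(n,r,m)=(5,3,2)$, i.e.\ $V'=\{1,2,4,6\}$, the value $b=2\in[s,ms-1]=\{2,3\}$ is coprime to $3$, so $V'$ is not tight. Yet no $t=u/6$ beats $\frac15$; a witness is $t=\frac{23}{60}=\frac13+\frac1{20}$.

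\textbf{The missing mechanism.} For $t=p/r+\delta$ one has $\norm{mrt}=\norm{mr\delta}$. By Section~3, the part of $U$ near $p/r$ is $-\frac{s}{rnF(u)}<\delta<\frac{s}{rnF(r-u)}$, with $|\delta|\le\frac1{2rn}$ deleted when $2r\le n-1$. Take $n\ge3$; the case $n=2$ is trivial since $\LR(\{m\})=\frac12$.

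If $2r>n-1$, then $mr\delta$ sweeps an interval about $0$ with one-sided extents $\frac{ms}{nF(u)}$ and $\frac{ms}{nF(r-u)}$. Hence $\norm{mrt}\le\frac1n$ on all of $U$ iff $I(n,r)\ge ms$. Here $I$ is the least integer $\ge s$ coprime to $r$, so this is exactly the gcd criterion; $b$ plays the role of $F(\pm u)$. The same picture gives the lower bound:
\begin{itemize}
\item if $I\le ms$, the closure of a component contains a point with $\norm{mrt}=\frac1n$;
\item if $mr=kn$, then $ms=(m-k)n>I$, so when $I>ms$ we have $n\nmid mr$ and $t=\frac1n$ works.
\end{itemize}

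If $2r\le n-1$ and $m\ge2$, the window $[s,ms-1]\supseteq[s,2s-1]$ contains $r$ consecutive integers. So the criterion always fails, and you must prove strict inequality. On the side where $F(\pm u)=I$, $mr\delta$ sweeps $(\frac m{2n},\frac{ms}{nI})$. Using $s\ge\frac{n+1}2$ and $I\le n-1$, one checks this lies in a single $[k-\frac1n,k+\frac1n]$ only if $k=1$, $m=2n-2$, $2r=n-1$ and $I=n-1$. That forces $r=1$, since for $r\ge2$ no unit class contains $n-1=2r$; i.e.\ $(n,r,m)=(3,1,4)$.

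That is where the exception comes from. It is not a degenerate case to be checked by hand, and your sketch never separates the two regimes.
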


\begin{theorem}[\cite{GoddynWong2006}, Thm.~2.4]\label{thm:GW24}
For each fixed $r$ there are only finitely many pairs $(n,w)$ with
$[n-1]_{r\mapsto w}$ tight.
\end{theorem}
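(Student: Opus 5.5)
We prove Theorem~\ref{thm:GW24} with the covering-length argument that the abstract describes. Fix $r$, and suppose $V=[n-1]_{r\mapsto w}$ is tight with $w\notin[n-1]$; if $w\le n-1$ the set has fewer than $n-1$ speeds, so only $w>n-1$ needs treatment. Let $U=U(n,r)$ be the set of $t$ with $\norm{it}>1/n$ for all $i\in[n-1]\setminus\{r\}$. Tightness gives $\min_{v\in V}\norm{vt}\le 1/n$ for every $t$. Hence on $U$ we must have $\norm{wt}\le1/n$. On a connected interval $J\subseteq U$, the function $t\mapsto wt$ sweeps an interval of length $w|J|$. The set $\{x:\norm{x}\le1/n\}$ has components of length $2/n$, separated by gaps of length $1-2/n>0$ (for $n\ge3$). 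So $w|J|\le 2/n$, that is, $w\le 2/(n|J|)$.

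The main step is to exhibit, for each large $n$ relative to $r$, a component $J$ of $U(n,r)$ with $|J|\gg 1/(n\cdot\text{something bounded})$, so that the inequality caps $w$ below $n$. The natural candidates are points $t$ near a fraction $a/r$ (or near $u/r$ for a unit $u$) at which $\norm{rt}$ is small. Deleting speed $r$ frees exactly the region where $r$ was the unique speed within $1/n$ of $\Z$. Near $t_0=u/r$, a speed $i$ satisfies $\norm{it}\approx\norm{iu/r}+i\,|t-t_0|$. Speeds with $i\not\equiv0\pmod r$ are bounded away from $\Z$ by at least $1/r$ at $t_0$. So only the multiples $i=kr\le n-1$ of $r$ threaten, and these stay close to $\Z$ on a window of width about $1/(kr n)$.

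A cleaner choice is to mimic the Goddyn--Wong analysis in the regime $2r>n-1$, or to compute directly: the component of $U$ adjacent to $t_0$ has length governed by the largest $i\le n-1$ with $i\equiv\pm u\pmod r$, which is the quantity $I(n,r)$. Its length works out to $(2s-I)/(2rIn)$-type expressions. This gives $w\le 4rI/(2s-I)$. Since $I\le n-1$ and $s=n-r$, for $n$ large compared with $r$ we get $2s-I\ge n-2r+1$, and the bound becomes $w\le 4r(n-1)/(n-2r+1)$. This is $O(r)$, hence below $n-1$ once $n$ is a sufficiently large multiple of $r$; the abstract's target is $n>6r$. In that range no $w>n-1$ can be inserted, so $n$ is bounded in terms of $r$, and then $w\le 4rI/(2s-I)$ bounds $w$. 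Finitely many pairs $(n,w)$ remain.

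The obstacle is the exact component-length computation. One must check that, near the chosen $t_0$, the nearest competing speeds $i\equiv\pm u\pmod r$, approaching $\pm1/n$ from the two sides, really determine the endpoints. One must also check that no other $i$ (including multiples of $r$, at $t$ slightly off $t_0$) cuts the interval earlier. I would first handle $t_0=1/r$ with $u=1$, bounding $|J|$ from below crudely by $c/(rn)$ for an absolute $c$. That already gives $w\le 2r/c$, and hence finiteness: for $n>2r/c+1$ no tight modification exists. Sharpening the constant to $6$ then requires optimizing over $u$ and the sign, which is where $I(n,r)$ enters.
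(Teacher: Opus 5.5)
Your route in the regime $2r\le n-1$ is the paper's: Lemma~\ref{lem:interval} applied to the punctured component of $U(n,r)$ beside $p/r$, exactly as in Theorem~\ref{thm:bound}. The proof has a gap, however, in the complementary range $r<n\le 2r$, i.e.\ $2r>n-1$. For fixed $r$ this range contains only finitely many $n$, but for each of them you still have to bound $w$. Your closing step (``then $w\le 4rI/(2s-I)$ bounds $w$'') is not valid there.

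That inequality comes from the component $\frac1{2rn}<|\delta|<\frac{s}{rnF(\pm u)}$, whose inner wall is produced by the speed $2r$. When $2r>n-1$ this speed is absent and the component contains $p/r$, so $(2s-I)/(2rnI)$ is not its length; it can even be negative. For $(n,r)=(8,6)$ one has $s=2$ and $I=5$. Your inequality would then force $w<0$ and exclude the tight set $[7]_{6\mapsto12}=\{1,2,3,4,5,7,12\}$.

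The repair is short. If $2r>n-1$, no $i\in[n-1]\setminus\{r\}$ is divisible by $r$. So for $t=\frac1r+\delta$ with $|\delta|<\frac{s}{rn(n-1)}$ you get $\norm{it}\ge\frac1r-(n-1)|\delta|>\frac1r-\frac{s}{rn}=\frac1n$. Hence $U(n,r)$ contains an interval of length $\frac{2s}{rn(n-1)}$, and $w\le r(n-1)/s$. The paper handles this range differently: Proposition~\ref{prop:div} gives $r\mid w$, and then the criterion of Theorem~\ref{thm:GW23} fails for $w=mr$ once $(m-1)s\ge r$, because $\{s,\dots,ms-1\}$ then contains an integer $\equiv1\pmod r$.

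Separately, the step you flag as the main obstacle is unnecessary. Lemma~\ref{lem:interval} needs only some connected subset of $U(n,r)$, not an exact component. For $2r\le n-1$, the interval $\frac1{2rn}<\delta<\frac{s}{rn(n-1)}$ beside $t_0=\frac1r$ already lies in $U(n,r)$:
\begin{itemize}
\item for multiples of $r$, $\norm{krt}=kr\delta>\frac1n$ when $k\ge2$;
\item for $r\nmid i$, $\norm{it}\ge\frac1r-(n-1)\delta>\frac1n$.
\end{itemize}
Its length $\frac{n-2r+1}{2rn(n-1)}$ gives $w\le\frac{4r(n-1)}{n-2r+1}$, and $w\ge n$ then yields $n\le 6r-2$. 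So, contrary to your last sentence, the constant $6$ needs no optimisation over $u$ or the sign. The paper obtains it from precisely this crude estimate $I\le n-1$; the exact value of $I(n,r)$ only sharpens the bound on $w$ for a given $n$.
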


Theorem \ref{thm:GW23} assumes that the inserted speed is a multiple of the
deleted one. Theorem \ref{thm:GW24} removes that assumption but is not
effective: its proof shows that $n\ge 12r$ is impossible, and for $n<12r$ it
only asserts that $w$ cannot be ``sufficiently large depending on $n$''.
Immediately afterwards the authors write that they ``believe Theorem 2.4
partially explains why there are no further sporadic speed sets resembling''
$\{1,3,4,7\}$ and $\{1,3,4,5,9\}$, ``where the speed $2$ is replaced by an odd
integer''. The purpose of this note is to make the finiteness effective and to
settle that case.

We also recall, for use below, the following observation contained in the proof
of \cite[Lemma 2.2]{GoddynWong2006}: \emph{if $2x\ge n$ then no speed of
$[n-1]\setminus\{x\}$ is divisible by $x$, so at $t=1/x$ every remaining speed
is at distance $\ge\frac1x>\frac1n$ from $\Z$.} One line more gives a fact
which we could not find stated anywhere and which we shall use to dispose of
the range $2r>n-1$.

\begin{proposition}[divisibility; immediate from \cite{GoddynWong2006}, Lemma
2.2]\label{prop:div}
Let $V=([n-1]\setminus R)\cup W$ be tight with $R\subseteq[n-1]$, $|R|=|W|$ and
$\min W>n-1$. Then every $r\in R$ with $2r>n-1$ divides some element of $W$.
In particular $[n-1]_{r\mapsto w}$ with $2r>n-1$ can be tight only if
$r\mid w$, so Theorem \ref{thm:GW23} classifies all single-speed modifications
in that range.
\end{proposition}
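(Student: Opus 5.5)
The plan is to test $V$ at the single time $t=1/r$ and prove the contrapositive: if $r\in R$ has $2r>n-1$ and divides no element of $W$, then $\LR(V)>\frac1n$, so $V$ is not tight.

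\emph{Step 1: the retained baseline speeds.} Since $2r\ge n$, the multiples of $r$ in $[n-1]$ are only $r$ itself, because $2r>n-1$. Every $v\in[n-1]\setminus R$ therefore satisfies $r\nmid v$. Then $vt=v/r$ has fractional part in $\{1/r,\dots,(r-1)/r\}$, so $\norm{v/r}\ge 1/r$. Also $r\le n-1<n$, hence $1/r>1/n$.

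\emph{Step 2: the inserted speeds.} For $w\in W$ with $r\nmid w$, the same argument gives $\norm{w/r}\ge 1/r>1/n$. So if $r$ divides no element of $W$, every speed of $V$ stays strictly farther than $1/n$ from $\Z$ at $t=1/r$. Hence $\LR(V)\ge\min_{v\in V}\norm{v/r}\ge 1/r>1/n$, and $V$ is not tight.

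The hypothesis $\min W>n-1$ is only needed so that $V$ is a genuine set of $n-1$ distinct speeds; the argument never uses it. We also need $r\ge2$ for $t=1/r$ to be a nontrivial time. The case $r=1$ with $2r>n-1$ forces $n\le2$, where the claim holds trivially since $1$ divides everything.

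\emph{Step 3: the ``in particular'' clause.} Take $R=\{r\}$ and $W=\{w\}$; tightness forces $r\mid w$, say $w=mr$. If $w\le n-1$, then $w$ would be a multiple of $r$ in $[n-1]$ other than $r$, which we excluded. So $w>n-1$ automatically, or $w=r$ in the trivial case. Theorem~\ref{thm:GW23} then covers exactly these replacements.

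There is no real obstacle here. The only points needing care are the edge cases $r=1$ and $n=2$, and checking that $1/r>1/n$ holds strictly; it does, since $r\le n-1$.
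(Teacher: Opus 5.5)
Your proof is correct and follows the paper's argument. You evaluate at $t=1/r$, where every retained baseline speed, and every inserted speed not divisible by $r$, lies at distance at least $\frac1r>\frac1n$ from $\Z$. The paper argues directly rather than by contrapositive: tightness yields some $v\in W$ with $\norm{v/r}\le\frac1n$, and since $\norm{v/r}\in\frac1r\Z_{\ge0}$ this forces $\norm{v/r}=0$.
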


\begin{proof}
Take $t_0=1/r$. For $i\in[n-1]\setminus R$ we have $i\ne r$ and, since
$2r>n-1\ge i$, also $r\nmid i$; hence $\norm{i t_0}\ge\frac1r>\frac1n$ as
$r\le n-1$. Tightness gives some $v\in V$ with $\norm{v t_0}\le\frac1n$, and by
the above $v\in W$. As $\norm{v/r}\in\frac1r\Z_{\ge0}$ and
$\frac1n<\frac1r$, we get $\norm{v/r}=0$, i.e.\ $r\mid v$.
\end{proof}

\subsection{New results}
Throughout, for $2\le r\le n-1$ let
\[
U(n,r)=\Bigl\{t\in[0,1):\ \norm{it}>\tfrac1n\ \text{ for all }
i\in[n-1]\setminus\{r\}\Bigr\},
\]
the set of times at which deleting speed $r$ destroys the baseline cover. For
an integer $x$ write
\begin{equation}\label{eq:F}
F(x)=F_{n,r}(x)=\max\bigl\{i\le n-1:\ i\ne r,\ i\equiv x \!\!\pmod r\bigr\}
\end{equation}
for the largest legal runner in the residue class of $x$, and put
\begin{equation}\label{eq:I}
I(n,r)=\min_{\substack{u \bmod r\\ \gcd(u,r)=1}}\ \min\bigl(F(u),\,F(r-u)\bigr).
\end{equation}
For $2r>n-1$ one has $I(n,r)=\min\{x\ge s:\gcd(x,r)=1\}$ with $s=n-r$; see
Section \ref{sec:components}.

\begin{theorem}[Component length]\label{thm:comp}
Let $2\le r\le n-1$ and $s=n-r$. Every connected component of $U(n,r)$ is
contained in the $\frac1{rn}$-neighbourhood of a single fraction $p/r$ with
$\gcd(p,r)=1$; it contains $p/r$ when $2r>n-1$ and is punctured at $p/r$ when
$2r\le n-1$. Writing $I=I(n,r)$, the longest component of $U(n,r)$ has length
\[
\ell(n,r)=
\begin{cases}
\dfrac{s}{r\,n}\ \displaystyle\max_{\substack{u \bmod r\\ \gcd(u,r)=1}}
\Bigl(\dfrac1{F(u)}+\dfrac1{F(r-u)}\Bigr), & 2r>n-1,\\[3ex]
\dfrac{2s-I}{2\,r\,n\,I}, & 2r\le n-1 \quad(\text{and }2s>I).
\end{cases}
\]
Moreover, in the first case $\sup_{t\in U(n,r)}\norm{rt}=\dfrac{s}{n\,I}$.
\end{theorem}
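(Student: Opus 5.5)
The plan is to localise everything near the fractions $p/r$ and then do an exact one-dimensional covering computation there. By Dirichlet, $[n-1]$ covers the whole circle: every $t$ lies within $\frac1{qn}$ of some $p'/q$ with $q\le n-1$. For $i\in[n-1]$ the set $\{\norm{it}\le\frac1n\}$ is the union of the intervals $J(p',i)=[p'/i-\frac1{in},\,p'/i+\frac1{in}]$. So $U(n,r)$ is contained in the union of the intervals $J(p,r)$, minus the intervals of the other speeds.

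If $\gcd(p,r)=d>1$, then $p/r=p''/(r/d)$, and the interval $J(p'',r/d)$ has the larger radius $\frac{d}{rn}$. Since $r/d\neq r$ is a legal speed, it swallows $J(p,r)$. Hence every point of $U(n,r)$ lies within $\frac1{rn}$ of a reduced $p/r$. The intervals $J(p,r)$ for distinct reduced $p$ are disjoint, because $\frac2{rn}<\frac1r$. This gives the localisation claim.

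Next I will fix a reduced $p/r$ and write $t=p/r+\delta$ with $|\delta|\le\frac1{rn}$. I will compute exactly which $\delta$ are covered by a speed $i\ne r$. A fraction $p'/i$ satisfies $p'/i-p/r=\frac{p'r-ip}{ir}$, and $J(p',i)$ reaches $p/r+\delta$ on the side $\operatorname{sign}(p'r-ip)$ exactly when
\[
|\delta|\ \ge\ \frac{k}{ir}-\frac1{in}=\frac{kn-r}{irn},\qquad k=|p'r-ip|.
\]
Consider first $k=0$, i.e.\ $r\mid i$, so $i=jr$ with $j\ge2$. This occurs only when $2r\le n-1$, and then $J(jp,jr)$ covers $|\delta|\le\frac1{jrn}$. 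The largest such neighbourhood is $|\delta|\le\frac1{2rn}$, which produces the puncture at $p/r$.

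Consider next $k=1$, i.e.\ $ip\equiv\pm1\pmod r$. On the corresponding side the covering starts at $\frac{s}{irn}$. The innermost threshold comes from the largest admissible $i$, which is $F(\pm p^{-1})$. With $u\equiv p^{-1}$, the uncovered stretch on the two sides therefore reaches $\frac{s}{rnF(u)}$ and $\frac{s}{rnF(r-u)}$ respectively.

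Finally, for $k\ge2$ I will check that these intervals never reach further in than the $k=1$ intervals. Since $i\le n-1$, it suffices that $(2n-r)F\ge(n-r)(n-1)$. This follows from $F\ge s$ roughly, because each residue class has a representative in $\{s,\dots,n-1\}$. The possible loss caused by excluding $i=r$ needs a small separate check, since only the class $0$ contains $r$ and that class is not a unit class unless $r=1$.

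Assembling the pieces gives the formulas. In the regime $2r>n-1$ there is no $k=0$ term, so the component through $p/r$ is the full interval
\[
\Bigl(-\frac{s}{rnF(r-u)},\ \frac{s}{rnF(u)}\Bigr),
\]
up to the choice of signs. Its length is $\frac{s}{rn}\bigl(\frac1{F(u)}+\frac1{F(r-u)}\bigr)$, and maximising over units $u$ gives $\ell(n,r)$. On this component $\norm{rt}=r|\delta|$, so the supremum of $\norm{rt}$ is $\frac{s}{n\min(F(u),F(r-u))}$; minimising $F$ over units yields $\frac{s}{nI}$. I will also verify that $I=\min\{x\ge s:\gcd(x,r)=1\}$ in this regime: here $r>\frac{n-1}2$ forces each class to have exactly one representative in $[s,n-1]$. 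In the regime $2r\le n-1$, each side of $p/r$ gives a component $\bigl(\frac1{2rn},\frac{s}{rnF}\bigr)$. Its length is $\frac{s}{rnF}-\frac1{2rn}=\frac{2s-F}{2rnF}$, which is nonempty iff $2s>F$. This length is largest when $F$ is smallest, i.e.\ $F=I$, giving the stated $\ell$.

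The main obstacle I expect is the bookkeeping in the $k\ge2$ comparison and in the exclusion of $r$ from the residue classes. I must make sure that no speed $i$ with $|p'r-ip|\ge2$, and no non-reduced fraction, intrudes closer to $p/r$ than the $k=1$ threshold. I would first try the crude inequality $F\ge s-1$ combined with $k\ge2$. Failing that, I would use the Farey-neighbour characterisation: the neighbours of $p/r$ in the Farey sequence of order $n-1$ have denominators $F(\pm u)$, which makes the $k=1$ intervals automatically the innermost ones.
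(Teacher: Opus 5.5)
Your proposal is correct and is essentially the paper's proof: the same Dirichlet localisation to reduced $p/r$ (non-reduced ones swallowed by the speed $r/d$), the puncture $|\delta|\le\frac1{2rn}$ from the runner $2r$, the walls $\frac{s}{rnF(\pm u)}$ from the runners with $ip\equiv\pm1\pmod r$, and the disposal of $k\ge2$ through $(2n-r)F\ge s(n-1)$, which follows from $F\ge s$. Two small remarks: first, $F(x)\ge s$ holds exactly for every nonzero class (the block $\{s,\dots,n-1\}$ meets each class once and $r$ lies in class $0$), so the fallbacks in your last paragraph are unnecessary, and $F\ge s-1$ alone would in fact only work when $s^2\ge n$; second, your ``exactly when'' tacitly uses that for $k\ge1$ the interval $J(p',i)$ extends to $|\delta|=\frac{kn+r}{irn}>\frac1{rn}$, which is the paper's check that nothing beyond a wall remains uncovered.
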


\begin{remark}
In the regime $2r>n-1$ the two radii of a component are governed by the classes
of $u$ and of $r-u$ \emph{separately}, and the minimum defining $I$ may be
attained on one side only; consequently $\ell(n,r)$ is in general strictly
smaller than $2s/(rnI)$. For instance $(n,r)=(5,3)$ gives $F(1)=4$, $F(2)=2$,
so $I=2$ while $\ell=\frac{2}{15}(\frac14+\frac12)=\frac1{10}<\frac2{15}$. The
quantity actually used in Theorem \ref{thm:bound} is the second case, where the
symmetric puncture makes the two sides interchangeable.
\end{remark}

\begin{theorem}[Effective bound]\label{thm:bound}
Let $[n-1]_{r\mapsto w}$ be tight with $w>n-1$ and $2r\le n-1$. Then
\[
w\ \le\ \frac{4\,r\,I(n,r)}{2s-I(n,r)}\qquad\text{and}\qquad n\ \le\ 6r .
\]
\end{theorem}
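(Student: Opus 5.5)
The plan is to combine the component-length formula of Theorem~\ref{thm:comp} (second case) with the elementary fact that the set where a single speed $w$ stays $1/n$-close to $\Z$ is a disjoint union of short closed intervals. The second inequality ($n\le 6r$) will then follow from crude two-sided bounds on $I(n,r)$.

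\emph{Step 1 (covering forces $w$ small).} Suppose $V=[n-1]_{r\mapsto w}$ is tight. Then for every $t$ some $v\in V$ has $\norm{vt}\le\frac1n$. On $U(n,r)$ every speed of $[n-1]\setminus\{r\}$ fails this by definition, so the inserted speed must do the job: $\norm{wt}\le\frac1n$ for all $t\in U(n,r)$. The set $\{t:\norm{wt}\le\frac1n\}$ is the union of the closed intervals $[\frac{k}{w}-\frac1{wn},\frac kw+\frac1{wn}]$. Each has length $\frac2{wn}$, and consecutive ones are separated by gaps of positive length $\frac1w-\frac2{wn}$ (since $n\ge3$). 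A connected subset of this union therefore lies in a single interval. So every connected component $C$ of $U(n,r)$ has length at most $\frac2{wn}$.

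\emph{Step 2 (insert the exact length).} I apply Step 1 to a longest component. Since $2r\le n-1$, Theorem~\ref{thm:comp} gives its length as $\ell=\frac{2s-I}{2rnI}$, provided $2s>I$. I first check this proviso. Every residue class mod $r$ meets the block $\{s,\dots,n-1\}$ of $r$ consecutive integers, and that block avoids $r$ because $r<s$. Hence $s\le F(x)\le n-1$ for all $x$, so $s\le I\le n-1$. Moreover $n-1<2s=2n-2r$ is equivalent to $2r<n+1$, which holds. So $\frac{2s-I}{2rnI}\le\frac{2}{wn}$, which rearranges to $w\le\frac{4rI}{2s-I}$.

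\emph{Step 3 ($n\le 6r$).} The map $I\mapsto \frac{4rI}{2s-I}$ is increasing on $I<2s$, so $I\le n-1$ gives $w\le\frac{4r(n-1)}{n-2r+1}$. Combined with $w\ge n$ this yields $q(n):=n^2-(6r-1)n+4r\le 0$. The roots of $q$ have product $4r>0$ and sum $6r-1$, so the larger root is below $6r-1$, and $q(n)>0$ for every $n\ge 6r$. Hence $n<6r$ (in particular $n\le6r$).

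I expect the only delicate point to be Step~1's use of connectedness. One must make sure the component provided by Theorem~\ref{thm:comp} really is an interval of the stated length, not the punctured union of two half-neighbourhoods of $p/r$. The puncture splits the neighbourhood, and the theorem's $\ell$ refers to a single connected piece, which is exactly what the argument needs. Everything else is bookkeeping.
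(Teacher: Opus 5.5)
Your proof is correct and follows the paper's argument essentially step for step. Tightness forces the longest component of $U(n,r)$, which has length $\frac{2s-I}{2rnI}$ by Theorem~\ref{thm:comp}, into a single interval of $\{t:\norm{wt}\le\frac1n\}$, giving $w\le\frac{4rI}{2s-I}$; then $I\le n-1$ and $w\ge n$ yield $n^2-(6r-1)n+4r\le0$. You handle the quadratic via the sum and product of its roots rather than the paper's evaluation at $n=6r-1$ plus monotonicity, but both routes actually give $n\le 6r-2$.
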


The bound $n\le6r$ improves the constant $12$ implicit in the proof of
Theorem \ref{thm:GW24}, and together with Proposition \ref{prop:div} it turns
the classification of single-speed modifications into a finite computation for
every $n$: speeds $r<n/6$ are impossible, speeds $r>(n-1)/2$ are handled by
Theorem \ref{thm:GW23}, and in the middle range $w$ is explicitly bounded.
Carrying this out for the two smallest values of $r$ gives the case
highlighted in \cite{GoddynWong2006}.

\begin{theorem}[Complete classification for $r=2$ and $r=3$]\label{thm:r23}
Let $n\ge5$ and $w>n-1$. Then $([n-1]\setminus\{2\})\cup\{w\}$ is tight if and
only if $(n,w)\in\{(5,7),\,(6,9)\}$; and $([n-1]\setminus\{3\})\cup\{w\}$ is
never tight.
\end{theorem}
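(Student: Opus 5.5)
The plan is to reduce both classifications to a finite check using Proposition~\ref{prop:div} and Theorem~\ref{thm:bound}, and then settle the finitely many remaining cases by exact computation.

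\emph{The case $r=2$.} Here $2r\le n-1$ requires $n\ge5$, which is assumed. Theorem~\ref{thm:bound} then gives $n\le 6r=12$, so only $5\le n\le12$ remain. For each such $n$ compute $I(n,2)$. The only unit modulo $2$ is $u=1$, so
\[
I=F(1)=\text{the largest odd } i\le n-1 ,
\]
which is $n-1$ or $n-2$. With $s=n-2$ the bound reads
\[
w\le \frac{8I}{2(n-2)-I}.
\]
For $n$ odd, $I=n-2$, and the bound becomes $w\le 8$. For $n$ even, $I=n-1$, and the bound becomes $w\le 8(n-1)/(n-3)$, which is at most $13$ for $n=6$ and smaller for larger $n$. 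So only a short list of pairs $(n,w)$ with $n-1<w$ survives. For each surviving pair, evaluate $\LR([n-1]_{2\mapsto w})$ exactly. It suffices to test whether the uncovered set $U(n,2)$, which consists of components around $t=1/2$ by Theorem~\ref{thm:comp}, is covered by $\{t:\norm{wt}\le 1/n\}$. This is a finite interval-covering check with rational endpoints, and it should leave exactly $(5,7)$ and $(6,9)$. The ``if'' direction is the known tightness of $\{1,3,4,7\}$ and $\{1,3,4,5,9\}$, verified by the same check.

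\emph{The case $r=3$.} Split into two ranges.
\begin{itemize}
\item For $n\le 6$ we have $2r>n-1$. Proposition~\ref{prop:div} forces $3\mid w$, so Theorem~\ref{thm:GW23} applies with $s=n-3$. Tightness would require $\gcd(3,b)>1$ for all $b\in\{s,\dots,ms-1\}$. With $w>n-1$ we have $m\ge2$, so this range contains at least two consecutive integers (or $s=1$ and $b=1$), and the condition fails. The exceptional triple $(3,1,4)$ is irrelevant here. So there are no tight sets in this range.
\item For $7\le n\le18$, Theorem~\ref{thm:bound} again bounds $w$ explicitly via $I(n,3)=\min(F(1),F(2))$, and a finite exact check finishes this range.
\end{itemize}

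The main obstacle is making the finite verification rigorous rather than numerical. I would handle it by noting that each test reduces to comparing the explicit rational intervals of $U(n,r)$, whose components are described in Theorem~\ref{thm:comp}, with the union of intervals $[(k-1/n)/w,(k+1/n)/w]$, all in exact rational arithmetic. One also has to watch the side condition $2s>I$ in Theorem~\ref{thm:bound}. It holds in all these cases, since $s=n-r\ge n/2$ while $I\le n-1<2s$.
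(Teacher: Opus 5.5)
Your proposal is correct and is essentially the paper's proof. It uses the same three ingredients: Theorem~\ref{thm:bound} with $I(n,2)$ the largest odd number $\le n-1$ (giving $w\le8$ or $w\le 8(n-1)/(n-3)$), Proposition~\ref{prop:div} with Theorem~\ref{thm:GW23} for $r=3$, $n\le6$, and a finite exact check for the remaining cases.

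The differences are cosmetic. You cap $n$ by $n\le 6r$, whereas the paper solves $w\ge n$ against the explicit bound to get $n\le10$ and $n\le16$; the extra values you carry are vacuous. Your covering test only certifies $\LR\le\frac1n$, so the lower bound for $(5,7)$ and $(6,9)$ comes from evaluating at $t=\frac1n$, which your appeal to their known tightness implicitly does.
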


Thus the two sporadic sets of \eqref{eq:known} are the only ones of their kind,
confirming the expectation of \cite{GoddynWong2006}. Our final results are
computational; every value of $\LR$ is certified in exact rational arithmetic
(Section \ref{sec:computations}).

\begin{theorem}[Censuses]\label{thm:census}
\emph{(i)} For $5\le n\le45$ and $n\le w\le 10n$ the tight sets
$[n-1]_{r\mapsto w}$ are exactly the eleven of Table \ref{tab:census1}: the two
sporadic sets of Theorem \ref{thm:r23} and nine members of the Goddyn--Wong
families, each satisfying the criterion of Theorem \ref{thm:GW23}.
\emph{(ii)} For $6\le n\le34$ and inserted speeds at most $4n$, the only tight
set $([n-1]\setminus\{r_1,r_2\})\cup\{w_1,w_2\}$ ($r_1\ne r_2$, $w_1\ne w_2$,
$w_i>n-1$) is $\{1,4,5,6,7,11,13\}$ with $n=8$; it is not an accelerated
baseline, since neither $11$ nor $13$ is a multiple of $2$ or of $3$.
\emph{(iii)} For $4\le n\le16$ the tight sets with all speeds $<2n$ are the
baseline together with the four sets of \eqref{eq:known} (at $n=5,6,8,8$) and
$\{1,\dots,11,13,24\}$ at $n=14$.
\emph{(iv)} For $n=14,15,16$---the first values for which LRC is not settled---no
speed set with all speeds at most $2n-1$ violates the conjecture.
\end{theorem}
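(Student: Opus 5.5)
The statement is a finite computation, so the proof is an exhaustive search. The plan is to make every individual decision exact and to organise the search so that it provably enumerates every case.

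\emph{Exact primitives.} For a speed set $V$ and $\delta=\frac1n$, consider the closed arcs $C_v=\{t:\norm{vt}\le\delta\}$ and the open arcs $O_v=\{t:\norm{vt}<\delta\}$. Each is a union of intervals with rational endpoints $(k\pm\delta)/v$, and by the symmetry $t\mapsto 1-t$ it suffices to work on $[0,\frac12]$. Then
\[
\LR(V)\le\tfrac1n \iff \bigcup_v C_v\supseteq[0,\tfrac12], \qquad \LR(V)\ge\tfrac1n \iff \bigcup_v O_v\not\supseteq[0,\tfrac12].
\]
Both conditions are decided by sorting finitely many rationals and sweeping for a gap, with all endpoints stored as exact fractions. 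Tightness is the conjunction of the two. A violation of LRC, which is what (iv) rules out, is exactly the case where the open arcs cover. When the open arcs fail to cover, the sweep returns an explicit rational gap point $t$; this is a checkable certificate that $\LR(V)\ge\frac1n$. When the closed arcs cover, the sorted chain of overlapping arcs is the certificate that $\LR(V)\le\frac1n$. I would record these certificates so the table can be verified independently of the search code.

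\emph{Parts (i) and (ii).} These are direct loops. For (i), run over $5\le n\le45$, $r\in[n-1]$ and $n\le w\le10n$, about $4\cdot10^5$ sets of size at most $44$, and apply both tests to each. As consistency checks, Proposition \ref{prop:div} and Theorem \ref{thm:bound} should discard most pairs a priori, and every survivor with $2r>n-1$ should match the criterion of Theorem \ref{thm:GW23}. For (ii), loop over unordered pairs $\{r_1,r_2\}$ and $\{w_1,w_2\}$ with $n\le w_i\le4n$ and $6\le n\le34$. This gives roughly $10^{8}$ sets, so I would first apply a cheap necessary filter before the exact test. The filter is that the region $U$ left uncovered after deleting $r_1,r_2$ must be covered by $C_{w_1}\cup C_{w_2}$. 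Computing $U$ once per $(n,r_1,r_2)$ and testing each $w$ against its components removes almost everything, and only the survivors get the full exact test.

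\emph{Parts (iii) and (iv), the main obstacle.} Naive enumeration is infeasible here: already for $n=16$ there are $\binom{31}{15}\approx3\cdot10^8$ candidate sets. I would instead use a covering-driven branch-and-bound, normalising by $\gcd(V)=1$. The search maintains a partial set $P$ together with the exact uncovered part of $[0,\frac12]$. It takes the leftmost uncovered point $t^*$ (with closed arcs for (iii), open arcs for (iv)) and branches only over the admissible speeds $v<2n$ whose arc contains $t^*$. This is justified because any completion that covers must cover $t^*$ with one of its speeds.

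Two further devices keep the search exhaustive and finite. First, speeds are inserted in increasing order within each branch, to avoid duplicates. Second, a branch is pruned as soon as the remaining budget $n-1-|P|$ times the maximal arc measure $2\delta$ falls below the uncovered measure. For (iii), every leaf that covers with closed arcs is then tested for $\LR\ge\frac1n$ with the open-arc sweep, which yields the stated list. For (iv), one shows that no leaf covers with open arcs.

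The hard step is proving that this pruned search really is exhaustive and runs within reach at $n=16$. I would validate it in two ways. First, rerun the small cases $n\le9$ by brute force and compare. Second, check that the search reproduces the known tight sets \eqref{eq:known} and $\{1,\dots,11,13,24\}$, which acts as a test that no branch is lost.
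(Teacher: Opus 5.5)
Your exact primitives are sound. Deciding $\LR(V)\le\frac1n$ and $\LR(V)\ge\frac1n$ separately, by closed-arc and open-arc covering sweeps, is a valid alternative to the paper's evaluation of $f_V$ on the breakpoint set $T(V)$, and it is essentially the paper's cross-check (B). Parts (i) and (ii) go through as you describe. The gap is in the search for (iii) and (iv): as specified, it is not exhaustive, because your two devices are incompatible. If each branch inserts speeds in increasing order, and each node branches only on speeds whose arc covers the leftmost uncovered point $t^*$, then a speed below $\max P$ can never be added, even when it is needed later. Concretely, take $n=5$ and the tight set $\{1,3,4,7\}$.
\begin{itemize}
\item Under increasing order, this set can only be built along a path whose first speed is $1$.
\item After inserting $1$, the uncovered region is $(\frac15,\frac12]$.
\item Among $v\le 9$, only $v\in\{4,5,9\}$ have $\{t:\norm{vt}\le\frac15\}$ containing a right neighbourhood of $\frac15$, since one needs $5k-1\le v<5k+1$.
\item Each of these exceeds $3$, so $3$ is never inserted, and $\{1,3,4,7\}$ is never generated.
\end{itemize}
So the search would output a wrong list in (iii), and in (iv) it could miss a violator in the same way. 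Your justification ``any covering completion must cover $t^*$'' is valid only if every unused speed stays available.

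The standard repair is disjoint branching. Let $v_1<\dots<v_m$ be the admissible speeds covering $t^*$; the $j$-th child inserts $v_j$ and permanently forbids $v_1,\dots,v_{j-1}$. Every covering completion of $P$ then lies in exactly one subtree, the one indexed by the smallest $v_j$ it contains. This proves completeness with no duplicates, which regression tests against known sets cannot do.

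Three smaller points also need care:
\begin{itemize}
\item With closed arcs the uncovered set is relatively open, so a leftmost uncovered point need not exist. Take $t^*$ to be its infimum and branch on speeds whose closed arc contains some $[t^*,t^*+\eta)$.
\item A closed cover reached with $|P|<n-1$ must be padded in all possible ways before the open-arc test.
\item Each arc has measure $\delta$ in $[0,\frac12]$, not $2\delta$, so your pruning bound is sound but twice as weak as it could be.
\end{itemize}

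Finally, the paper sidesteps all of this because plain enumeration is feasible, contrary to your assessment. It examines all roughly $4\cdot10^8$ primitive sets with $\max V<2n$, $4\le n\le16$. It applies a floating-point pre-pass over $T(V)$ with rigorously bounded error (Lemma~\ref{lem:fp}) and rechecks the survivors exactly. Exhaustiveness is then trivial, and your faster sweep test would make that enumeration cheaper still.
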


\begin{table}[t]
\centering
\begin{tabular}{rrrl}
\toprule
$n$ & $r$ & $w$ & type\\
\midrule
5 & 2 & 7 & sporadic, $r\nmid w$ (Thm.\ \ref{thm:r23})\\
6 & 2 & 9 & sporadic, $r\nmid w$ (Thm.\ \ref{thm:r23})\\
8 & 6 & 12 & $m=2$\\
14& 12& 24 & $m=2$\\
20& 18& 36 & $m=2$\\
26& 24& 48 & $m=2$\\
32& 30& 60 & $m=2$\\
32& 30& 90 & $m=3$\\
33& 30& 60 & $m=2$\\
38& 36& 72 & $m=2$\\
44& 42& 84 & $m=2$\\
\bottomrule
\end{tabular}
\caption{All tight single-speed modifications with $n\le45$, $w\le10n$
(Theorem \ref{thm:census}(i)).}
\label{tab:census1}
\end{table}

Part (iii) might suggest that tightness always occurs below $2n$. It does not:

\begin{proposition}\label{prop:nowindow}
$\{1,2,\dots,29,31,90\}$ is tight with $n=32$ and largest speed $90>2n$.
\end{proposition}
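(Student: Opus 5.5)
The plan is to recognize the set as an accelerated baseline and read tightness off the equality criterion of Theorem~\ref{thm:GW23}. With $n=32$ the baseline is $[31]=\{1,\dots,31\}$. The set $\{1,\dots,29,31,90\}$ is obtained from it by deleting $r=30$ and inserting $w=90=3\cdot 30$. So it equals $[n-1]_{r\mapsto mr}$ with $(n,r,m)=(32,30,3)$, and it has $n-1=31$ distinct speeds, as required. Its gcd is $1$, so no normalization is needed.

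Next I would apply Theorem~\ref{thm:GW23}. It gives $\LR([31]_{30\mapsto 90})\ge\frac1{32}$, with equality if and only if $\gcd(r,b)>1$ for every $b\in\{s,s+1,\dots,ms-1\}$. Here $n\neq 2$ and $(n,r,m)\neq(3,1,4)$, so only this clause is relevant. We have $s=n-r=2$ and $ms-1=5$, so the range is $b\in\{2,3,4,5\}$. The gcds with $30=2\cdot3\cdot5$ are $2,3,2,5$ respectively, all greater than $1$. Hence equality holds, i.e.\ $\LR=\frac1{32}$, and the set is tight. Finally, $90>64=2n$, which is the claim.

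There is no real obstacle: the only step needing care is correctly identifying $s$ and the interval $\{s,\dots,ms-1\}$. As an independent sanity check (not needed for the proof), one can confirm the upper bound $\LR\le\frac1{32}$ directly in exact rational arithmetic, as in Section~\ref{sec:computations}. Outside $U(32,30)$ some baseline speed $i\neq30$ is already $\frac1{32}$-close to $\Z$. By Theorem~\ref{thm:comp} (case $2r>n-1$), $U(32,30)$ lies within the $\frac1{960}$-neighbourhoods of the fractions $p/30$ with $\gcd(p,30)=1$, and $\sup_{U}\norm{30t}=\frac{s}{nI}$. Here $I=\min\{x\ge2:\gcd(x,30)=1\}=7$, so $\norm{90t}\le 3\cdot\frac{2}{32\cdot7}<\frac1{32}$ there. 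This confirms the tightness obtained above.
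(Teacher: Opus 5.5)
Your proof is correct and is exactly the paper's argument: identify the set as $[31]_{30\mapsto 90}$ with $s=2$, $m=3$, and check that $\gcd(30,b)>1$ for $b\in\{2,3,4,5\}$ in Theorem~\ref{thm:GW23}. Your supplementary check via Theorem~\ref{thm:comp} is also sound. It uses $I=7$, so $\norm{90t}\le 3\cdot\frac{2}{32\cdot 7}=\frac{3}{112}<\frac{1}{32}$ on $U(32,30)$. This gives an independent proof of the upper bound, in place of the paper's exact computation.
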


\begin{proof}
This is $[31]_{30\mapsto90}$, i.e.\ $r=30$, $m=3$, $s=2$; the criterion of
Theorem \ref{thm:GW23} asks $\gcd(30,b)>1$ for $b\in\{2,3,4,5\}$, which holds.
We also verified $\LR=\frac1{32}$ by exact computation.
\end{proof}

\section{Preliminaries}\label{sec:prelim}

Write $f_V(t)=\min_{v\in V}\norm{vt}$.

\begin{lemma}[Dirichlet]\label{lem:dirichlet}
For every real $t$ and integer $N\ge1$ there is $1\le i\le N$ with
$\norm{it}\le\frac1{N+1}$. Hence $\LR([n-1])=\frac1n$.
\end{lemma}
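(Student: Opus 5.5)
The plan is to prove the approximation statement by the pigeonhole principle on fractional parts, then deduce the value of $\LR([n-1])$ from it together with an explicit time at which the bound is attained.

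\emph{First claim.} Fix $t$ and $N$, and consider the $N+2$ points
\[
0,\ \{t\},\ \{2t\},\ \dots,\ \{Nt\},\ 1
\]
in $[0,1]$. Here $\{x\}$ denotes the fractional part of $x$; the endpoint $1$ is included as a copy of $0$. After sorting, these points cut $[0,1]$ into $N+1$ consecutive gaps whose lengths sum to $1$, so some gap has length at most $\frac1{N+1}$. We then check the possible endpoints of that gap.
\begin{itemize}
\item If the endpoints are $\{jt\}$ and $\{kt\}$ with $0\le j<k\le N$, set $i=k-j$. Then $1\le i\le N$ and $\norm{it}=\norm{kt-jt}\le|\{kt\}-\{jt\}|\le\frac1{N+1}$.
\item If one endpoint is $0$ or $1$ and the other is $\{it\}$, then directly $\norm{it}\le\frac1{N+1}$.
\item The gap cannot be the one between $0$ and $1$, since at least the point $\{t\}$ lies in $[0,1]$ and separates them.
\end{itemize}
Coincident points (for example when $t$ is rational) only give gaps of length $0$, and the same argument covers them. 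This proves the first claim.

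\emph{Consequence.} Apply the first claim with $N=n-1$. For every $t$ this gives $\min_{i\le n-1}\norm{it}\le\frac1n$, hence $\LR([n-1])\le\frac1n$. For the reverse inequality, take $t=\frac1n$. For $1\le i\le n-1$ the number $it=\frac in$ lies in $[\frac1n,\frac{n-1}n]$, so $\norm{it}\ge\frac1n$. Therefore $\LR([n-1])\ge f_{[n-1]}(\frac1n)\ge\frac1n$, and the two inequalities give equality.

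The only delicate step is the bookkeeping with the endpoints $0$ and $1$: treating $1$ as a copy of $0$ is what produces exactly $N+1$ gaps and hence the sharp constant $\frac1{N+1}$ rather than $\frac1N$. Everything else is routine.
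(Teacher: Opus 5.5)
Your proof is correct. The paper gives no proof of this lemma and simply invokes Dirichlet's classical approximation theorem; your pigeonhole argument on $0,\{t\},\dots,\{Nt\},1$ is the standard proof of that theorem, and the witness $t=\frac1n$ supplies the lower bound that the paper leaves implicit. One small tidy-up: exclude the gap between $0$ and $1$ by noting that it has length $1>\frac1{N+1}$, rather than by saying $\{t\}$ separates them, since the latter needs a tie-breaking convention when $\{t\}=0$.
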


\begin{lemma}[Breakpoint lemma]\label{lem:breakpoint}
$\LR(V)$ is attained on the finite set
$T(V)=\bigl\{\frac{2a+1}{2v}:v\in V,\,0\le a<v\bigr\}\cup
\bigl\{\frac{b}{v+w}:v<w\in V,\,0\le b<v+w\bigr\}\cup
\bigl\{\frac{b}{w-v}:v<w\in V,\,0\le b<w-v\bigr\}$.
\end{lemma}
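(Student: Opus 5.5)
The plan is the standard critical-point argument. The function $f_V(t)=\min_{v\in V}\norm{vt}$ is continuous and piecewise linear on the circle $\mathbb R/\Z$, so its maximum is attained at a point where $f_V$ changes from increasing to decreasing (or is locally constant, in which case an endpoint of a maximal flat piece will do). I will show that every local maximum of $f_V$ lies in $T(V)$. Since $f_V$ is continuous on the compact circle, a maximum exists, and this proves the lemma. The proof is routine; the main point is to list the local configurations.

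First, I describe the pieces. Each $g_v(t)=\norm{vt}$ is a sawtooth with slopes $\pm v$. Its breakpoints are at $t=a/v$, which are minima with value $0$, and at $t=(2a+1)/(2v)$, which are maxima with value $\tfrac12$. On any interval avoiding all breakpoints and all crossing points $g_v=g_w$, the minimum $f_V$ coincides with a single $g_v$, so $f_V$ is linear there with slope $\pm v\neq0$. Consequently a local maximum $t^*$ of $f_V$ must be one of two kinds. Either it is a breakpoint of the active $g_v$, where only a maximum of $g_v$ can give a maximum of $f_V$; this is the point $(2a+1)/(2v)$. Or it is a point where two active functions $g_v$ and $g_w$ ($v<w$) cross, with $f_V$ switching from one to the other. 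Around $t^*$ the minimum is taken by a function increasing on the left and one decreasing on the right. A single $g_v$ cannot do this away from its peak, so two distinct speeds meet at $t^*$ with $\norm{vt^*}=\norm{wt^*}$.

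Second, I solve the crossing condition. The equality $\norm{vt}=\norm{wt}$ means $vt\equiv\pm wt\pmod 1$. In other words, $(w-v)t\in\Z$ or $(w+v)t\in\Z$, which gives $t=b/(w-v)$ or $t=b/(v+w)$, with $b$ reduced modulo the denominator into the stated range. Degenerate cases need a brief check. If $f_V$ is constant on an interval, then some $g_v$ has slope zero there, which is impossible since every slope is $\pm v\ne0$. So this does not occur except trivially, and if it did, the endpoints of the flat piece would fall into the same cases. Ties among three or more functions at $t^*$ still force some pair to cross there, so the same conclusion holds.

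The only step needing care is justifying that at a crossing-type maximum the two relevant functions genuinely coincide at $t^*$, rather than the maximum arising from a single function's kink. I will handle this by observing that a kink of a single $g_v$ with a negative jump in slope occurs only at its peaks. The set $T(V)$ is finite, and $f_V$ attains its maximum at one of these points, which gives $\LR(V)=\max_{t\in T(V)}f_V(t)$.
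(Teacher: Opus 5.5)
Your proposal is correct and follows the paper's argument. $f_V$ is piecewise linear with nonzero slopes, so its maximum lies either at a peak $(2a+1)/(2v)$ of a single active $\norm{vt}$, or at a crossing of two active constraints, where $\norm{vt}=\norm{wt}$ forces $(w\mp v)t\in\Z$; your remarks on compactness, flat pieces and multiple ties only spell out details the paper leaves implicit.
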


\begin{proof}
Each $\norm{vt}$ is piecewise linear with slopes $\pm v\neq0$, hence so is
$f_V$, and its maximum is at a breakpoint: a peak of the unique active
constraint (a valley would give $f_V=0$), or a crossing of two active
constraints, where $\norm{vt}=\norm{wt}$ forces $(v\mp w)t\in\Z$.
\end{proof}

\begin{lemma}[Interval lemma]\label{lem:interval}
Let $w\ge1$ and let $J$ be a connected subset of
$\{t:\norm{wt}\le\frac1n\}$, where $n\ge3$. Then $|J|\le\frac2{wn}$.
\end{lemma}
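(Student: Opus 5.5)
The set $\{t:\norm{wt}\le\frac1n\}$ is periodic with period $1/w$. It is the union of the closed intervals
\[
A_k=\Bigl[\tfrac{k}{w}-\tfrac{1}{wn},\ \tfrac{k}{w}+\tfrac{1}{wn}\Bigr],\qquad k\in\Z .
\]
The plan is to show that these intervals are pairwise disjoint. Then any connected subset $J$ lies inside a single $A_k$, and $|J|\le|A_k|=\frac{2}{wn}$.

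To check disjointness, note that $\norm{wt}\le\frac1n$ means the point $x=wt$ lies within $\frac1n$ of some integer $k$. This is equivalent to $t\in A_k$, so the set in question is exactly $\bigcup_k A_k$. Consecutive intervals $A_k$ and $A_{k+1}$ are separated by a gap of length
\[
\frac1w-\frac{2}{wn}=\frac{n-2}{wn},
\]
which is strictly positive because $n\ge3$.

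Now let $J$ be connected, so $J$ is an interval. Suppose $J$ met two distinct intervals $A_k$ and $A_j$. Then $J$ would contain every point between them, including a point of an open gap. At such a point $\norm{wt}>\frac1n$, contradicting $J\subseteq\{t:\norm{wt}\le\frac1n\}$. Hence $J\subseteq A_k$ for a single $k$, and taking lengths (Lebesgue measure, or diameter) gives $|J|\le\frac{2}{wn}$.

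There is no real obstacle here. The only point needing care is the strictness of the gap: for $n=2$ the intervals $A_k$ touch and the whole line would be covered. This is why the hypothesis $n\ge3$ is needed.
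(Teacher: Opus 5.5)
Your proposal is correct and follows the paper's own argument: the paper's proof also writes the set as $\bigcup_j\bigl[\frac jw-\frac1{wn},\frac jw+\frac1{wn}\bigr]$, observes that these intervals are pairwise disjoint because $\frac2{wn}<\frac1w$, and concludes that a connected subset lies in one of them. You additionally spell out the positive gap $\frac{n-2}{wn}$ and why the points in that gap lie outside the set, which is exactly where $n\ge3$ is used.
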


\begin{proof}
The set is $\bigcup_j[\frac jw-\frac1{wn},\frac jw+\frac1{wn}]$, a union of
intervals of length $\frac2{wn}$ which are pairwise disjoint because
$\frac2{wn}<\frac1w$. A connected subset lies in one of them.
\end{proof}

The strategy of Theorems \ref{thm:comp}--\ref{thm:r23} is now visible: compute
the components of $U(n,r)$, and apply Lemma \ref{lem:interval} with $J$ the
longest component, which the inserted speed $w$ must cover.

\section{Components of $U(n,r)$}\label{sec:components}

Fix $2\le r\le n-1$ and $s=n-r$. On $U=U(n,r)$ Lemma \ref{lem:dirichlet} (with
$N=n-1$) forces $\norm{rt}\le\frac1n$, so
\begin{equation}\label{eq:trap}
U\subseteq\bigcup_{p}\Bigl[\tfrac pr-\tfrac1{rn},\ \tfrac pr+\tfrac1{rn}\Bigr],
\end{equation}
a union of pairwise disjoint intervals. If $\gcd(p,r)=g>1$ then
$r'=r/g\le n-1$, $r'\ne r$ and $\norm{r't}\le\frac{r'}{rn}=\frac1{gn}<\frac1n$
throughout the $p$-th interval, so it misses $U$; hence only $p$ coprime to $r$
occur. Fix such a $p$, put $t_0=p/r$ and $t=t_0+\delta$ with
$|\delta|\le\frac1{rn}$, and let $u=p^{-1}\bmod r$.

For $i\in[n-1]$, $i\ne r$, we have
$\norm{it}=\norm{\frac{ip}{r}+i\delta}$, and $\norm{\frac{ip}r}=\frac{d_i}{r}$
where $d_i=\min(ip\bmod r,\ r-(ip\bmod r))$. Two cases occur.

\emph{Multiples of $r$.} If $r\mid i$ then $d_i=0$ and
$\norm{it}=i|\delta|$; the constraint $\norm{it}>\frac1n$ reads
$|\delta|>\frac1{in}$. Such $i$ exist iff $2r\le n-1$, and the binding one is
the smallest, $i=2r$, giving
\begin{equation}\label{eq:inner}
|\delta|>\frac1{2rn}.
\end{equation}

\emph{Non-multiples.} Here $d_i\ge1$ and, since $|i\delta|\le\frac{n-1}{rn}
<\frac1r$, we get $\norm{it}=\frac{d_i}{r}-i|\delta|$ when $\delta$ has the
sign making the two terms compete. The constraint $\norm{it}>\frac1n$ then
reads
\[
|\delta|\ <\ \frac{d_i\,n-r}{r\,n\,i}\,.
\]
We claim that the binding constraints are those with $d_i=1$, i.e.\
$i\equiv\pm u\pmod r$, and among those the largest legal $i$, namely
$F(\pm u)$ in the notation \eqref{eq:F}; this gives
\begin{equation}\label{eq:outer}
|\delta|<\frac{s}{r\,n\,F(\pm u)} .
\end{equation}
Indeed, the block $\{s,s+1,\dots,n-1\}$ consists of $r$ consecutive integers,
so it contains a representative of every residue class modulo $r$; for a
nonzero class that representative is not a multiple of $r$, hence a legal
runner, and therefore
\begin{equation}\label{eq:Fges}
F(x)\ \ge\ s\qquad\text{for every } x\not\equiv0\ (\mathrm{mod}\ r).
\end{equation}
Consequently, for any constraint with $d_i=k\ge2$ and any legal $i\le n-1$,
\[
\frac{d_i\,n-r}{r\,n\,i}\ \ge\ \frac{2n-r}{r\,n\,(n-1)}
=\frac{n+s}{r\,n\,(n-1)}\ >\ \frac{s}{r\,n\,F(\pm u)}\,,
\]
the last step because $F(\pm u)(n+s)\ge s(n+s)>s(n-1)$ by \eqref{eq:Fges};
so no such constraint is ever binding, proving the claim. The sign $+$
governs the left side of $t_0$ and the sign $-$ the right side.

\begin{proof}[Proof of Theorem \ref{thm:comp}]
Suppose first $2r>n-1$. Then no multiple of $r$ other than $r$ lies in
$[n-1]$, so \eqref{eq:inner} is vacuous and $t_0\in U$; by \eqref{eq:outer} the
component of $t_0$ is
$\bigl(t_0-\frac{s}{rnF(u)},\,t_0+\frac{s}{rnF(r-u)}\bigr)$, of length
$\frac{s}{rn}\bigl(\frac1{F(u)}+\frac1{F(r-u)}\bigr)$. Maximising over $p$,
equivalently over units $u=p^{-1}\bmod r$, gives the first case of
$\ell(n,r)$. No other part of the $p$-th interval of \eqref{eq:trap} meets
$U$: past the wall on, say, the left, the sawtooth $\norm{F(u)t}$ stays below
$\frac1n$ for a length $\frac2{F(u)n}$, while the remaining stretch has length
$\frac1{rn}-\frac{s}{rnF(u)}=\frac{F(u)-s}{rnF(u)}\le\frac{2}{F(u)n}$ because
$F(u)-s\le n-1-s<2r$.

For the supremum of $\norm{rt}$ note that $\norm{rt}=r|t-t_0|$ on the
component, so its supremum there equals
$\frac{s}{n}\min\bigl(F(u),F(r-u)\bigr)^{-1}$; taking the maximum over $p$,
i.e.\ the minimum of $\min(F(u),F(r-u))$ over units, gives
$\sup_U\norm{rt}=\frac{s}{nI}$ with $I$ as in \eqref{eq:I}. Here $F(x)$ equals
$x$ if $x\ge s$ and $x+r$ otherwise (for $x\in\{u,r-u\}$), because $n-1<2r$;
since $\gcd(r-1,r)=1$ and $r-1\ge s$, taking $u=\min\{x\ge s:\gcd(x,r)=1\}$
shows $I\le\min\{x\ge s:\gcd(x,r)=1\}$, while any unit $u$ has $F(u)=u\ge
I$ if $u\ge s$ and $F(u)=u+r>r-1\ge I$ otherwise; hence
$I=\min\{x\ge s:\gcd(x,r)=1\}$.

Now suppose $2r\le n-1$. Then $t_0\notin U$ by \eqref{eq:inner}, and the
component structure near $t_0$ is the pair of intervals
$\frac1{2rn}<\pm\delta<\frac{s}{rnF(\pm u)}$, each nonempty iff
$F(\pm u)<2s$. Their lengths are
$\frac{s}{rnF(\pm u)}-\frac1{2rn}=\frac{2s-F(\pm u)}{2rnF(\pm u)}$, which is
largest when $F(\pm u)$ is smallest; minimising over $p$ and over the two signs
gives $\ell(n,r)=\frac{2s-I}{2rnI}$ as claimed.
\end{proof}

\begin{proof}[Proof of Theorem \ref{thm:bound}]
Let $J$ be a longest component of $U(n,r)$, of length
$\ell=\frac{2s-I}{2rnI}$ by Theorem \ref{thm:comp}. Since
$[n-1]_{r\mapsto w}$ is tight, $f$ evaluated on $J$ must not exceed
$\frac1n$, and all speeds other than $w$ are $>\frac1n$ there; hence
$J\subseteq\{t:\norm{wt}\le\frac1n\}$, and Lemma \ref{lem:interval} gives
$\ell\le\frac2{wn}$, i.e.
\[
w\ \le\ \frac{2}{n\ell}=\frac{4rI}{2s-I}.
\]
Finally $I\le n-1$, so $2s-I\ge 2(n-r)-(n-1)=n-2r+1>0$ and
$w\le\frac{4r(n-1)}{n-2r+1}$. Combining with $w\ge n$ yields
$4r(n-1)\ge n(n-2r+1)$, i.e.\ $n^2-(6r-1)n+4r\le0$. The left-hand side
equals $4r>0$ at $n=6r-1$ and is increasing for $n\ge(6r-1)/2$, so in fact
$n\le6r-2$; we record the slightly weaker but cleaner bound $n\le6r$.
\end{proof}

\begin{proof}[Proof of Theorem \ref{thm:r23}]
Let $r=2$, so $s=n-2$, and note $2r=4\le n-1$ for $n\ge5$. The only unit
modulo $2$ is $u=1$, and $i^\pm$ is the largest odd number $\le n-1$; thus
$I=n-1$ for $n$ even and $I=n-2$ for $n$ odd. Theorem \ref{thm:comp} gives
\[
\ell(n,2)=\frac{2(n-2)-I}{4nI}=
\begin{cases}
\dfrac1{4n}, & n \text{ odd},\\[1.5ex]
\dfrac{n-3}{4n(n-1)}, & n \text{ even},
\end{cases}
\]
and Theorem \ref{thm:bound} gives $w\le 8$ for $n$ odd and
$w\le\frac{8(n-1)}{n-3}$ for $n$ even. With $w\ge n$ this leaves
$n\in\{5,6,7,8,10\}$: for odd $n$ we need $n\le8$; for even $n$ we need
$\frac{8(n-1)}{n-3}\ge n$, i.e.\ $n^2-11n+8\le0$, i.e.\ $n\le10$. In each of
the five surviving cases $w$ is bounded by $13$, and a direct check of all
pairs shows that the only tight sets are $(n,w)=(5,7)$ and $(6,9)$, i.e.\
$\{1,3,4,7\}$ and $\{1,3,4,5,9\}$.

For $r=3$ we have $s=n-3$ and $2r=6\le n-1$ for $n\ge7$; the units modulo $3$
are $1,2$ and $F(1),F(2)$ are the two largest $i\le n-1$ with $3\nmid i$, so
$n-3\le I\le n-1$ (with $I=n-2$ exactly when $3\mid n$). Since
$12I/(2(n-3)-I)$ is increasing in $I$, Theorem \ref{thm:bound} gives
$w\le\frac{12I}{2(n-3)-I}\le\frac{12(n-1)}{n-5}$, and $w\ge n$ forces
$n^2-17n+12\le0$, i.e.\ $n\le16$. Checking all $7\le n\le16$ and the
corresponding finitely many $w$ shows no tight set. For $n\le6$ the speed $3$
satisfies $2r>n-1$, so Proposition \ref{prop:div} applies and Theorem
\ref{thm:GW23} leaves nothing.
\end{proof}

\begin{remark}
The same computation can be run for any fixed $r$; the point of Theorem
\ref{thm:bound} is that only $O(r)$ values of $n$ and $O(r)$ values of $w$
need to be inspected, whereas Theorem \ref{thm:GW24} provides no explicit
range. Conversely, for $r$ close to $(n-1)/2$ the bound is weak
($I$ approaches $n-1$ and $2s-I$ becomes small), so the middle range
$n/6\le r\le(n-1)/2$ remains the interesting one.
\end{remark}

\section{Computations}\label{sec:computations}

\subsection{Certification}
Values of $\LR$ are computed by evaluating $f_V$ on the breakpoint set $T(V)$
of Lemma \ref{lem:breakpoint} in exact rational arithmetic. For the large
enumerations a floating-point pre-pass over the same set discards any $V$
admitting a candidate $t$ with $f_V(t)>\frac1n+10^{-3}$; the survivors are
recomputed exactly. This loses nothing:

\begin{lemma}\label{lem:fp}
Let $V$ have all speeds $\le M\le 2^{20}$ and let $t\in T(V)$, so that $t=a/q$
with $0<t<1$ and $q\le2M$. Evaluating $\norm{vt}$ in IEEE double precision as
$|\,x-\mathrm{rint}(x)|$ with $x=v\cdot\mathrm{fl}(a/q)$ incurs an absolute
error at most $2^{-31}$. In particular a set with $\LR(V)\le\frac1n$ is never
discarded by a threshold of $\frac1n+10^{-3}$.
\end{lemma}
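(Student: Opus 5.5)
We bound the error stage by stage: first the rounding of $t$, then its amplification by the multiplication by $v$, then the rounding of the product; after that we check that $\mathrm{rint}$ and the final subtraction add nothing. Write $\hat t=\mathrm{fl}(a/q)$. Since $0<t<1$, correctly rounded division gives $\hat t=t(1+\epsilon_1)$ with $|\epsilon_1|\le2^{-53}$, so $|\hat t-t|\le2^{-53}$.

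\emph{The product.} The computed value is $\hat x=\mathrm{fl}(v\hat t)=v\hat t(1+\epsilon_2)$ with $|\epsilon_2|\le2^{-53}$. We compare it with the exact $x^*=vt$ as
\[
|\hat x-x^*|\ \le\ v|\hat t-t|+v\hat t\,|\epsilon_2|\ \le\ M2^{-53}+M(1+2^{-53})2^{-53}\ \le\ 2^{-51}M .
\]
Using $M\le2^{20}$ this is at most $2^{-31}$.

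\emph{The subtraction.} Since $|\hat x|<2^{21}$, both $\mathrm{rint}(\hat x)$ and $\hat x-\mathrm{rint}(\hat x)$ are exactly representable. Indeed, the difference is a multiple of the ulp of $\hat x$ of absolute value at most $1/2$, so Sterbenz-type exactness applies and this step adds no error. The map $y\mapsto|y-\mathrm{rint}(y)|=\norm{y}$ is $1$-Lipschitz; hence the computed value of $\norm{vt}$ differs from the true one by at most $|\hat x-x^*|\le2^{-31}$. Ties in $\mathrm{rint}$ are harmless, because $\norm{\cdot}$ is continuous and equals $1/2$ on both sides of a tie.

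\emph{Conclusion.} Taking the minimum over $v\in V$, each computed $f_V(t)$ is within $2^{-31}$ of the exact value, since the minimum is also $1$-Lipschitz in the sup norm. If $\LR(V)\le\frac1n$, then every $t\in T(V)$ has $f_V(t)\le\frac1n$. The computed value is therefore at most $\frac1n+2^{-31}<\frac1n+10^{-3}$, so $V$ is never discarded.

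The main obstacle is the bookkeeping in the subtraction step: the argument needs the product to have magnitude below $2^{52}$, so that $\mathrm{rint}$ and the subtraction are exact. This holds comfortably here since $|\hat x|<2^{21}$.
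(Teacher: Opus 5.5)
Your proof is correct and follows essentially the same route as the paper: the division and the multiplication each contribute a relative error of at most $2^{-53}$, which becomes an absolute error of order $2^{-52}M\le2^{-31}$ because $vt<M\le2^{20}$, and $\mathrm{rint}$ and the final subtraction add no error. Your explicit remark that $y\mapsto\norm{y}$ and the minimum over $v$ are $1$-Lipschitz makes precise a step the paper leaves implicit in its phrase ``rounding to the nearest integer is exact''.
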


\begin{proof}
The division and the multiplication each carry a relative error at most
$u=2^{-53}$, so the computed product is $vt(1+\delta)$ with
$|\delta|\le 2u+u^2<2^{-51.9}$. Since $t<1$ we have $vt<M\le2^{20}$, whence
the absolute error is below $2^{20}\cdot2^{-51.9}<2^{-31}$. Rounding to the
nearest integer is exact, and the final subtraction is exact because both
operands are doubles of magnitude at most $2^{20}+1$ whose difference has
magnitude at most $1$, hence is representable. Finally
$2^{-31}<10^{-9}\ll10^{-3}$.
\end{proof}

\subsection{Searches}
Table \ref{tab:comp} lists the searches; counts are exact numbers of candidate
sets examined after removing duplicates and non-primitive sets.

\begin{table}[t]
\centering
\small
\begin{tabular}{llrl}
\toprule
search & range & sets examined & outcome\\
\midrule
one speed, all $r$ & $5\le n\le60$, $w\le12n$ & $2.4\cdot10^5$ & 11 tight for $n\le45$
(Tab.\ \ref{tab:census1})\\
one speed, $r=2$ & $5\le n\le60$, $w\le12n$ & $2.1\cdot10^4$ & only $(5,7),(6,9)$\\
one speed, $r=3$ & $7\le n\le20$, $w\le20n$ & $3.7\cdot10^3$ & none\\
two speeds & $6\le n\le34$, $w_i\le4n$ & $2.3\cdot10^7$ & only $n=8$\\
two speeds, multiples & $n\le80$, $m_i\le5$ & $1.6\cdot10^6$ & only $n=74$\\
all shapes, $\max V<2n$ & $4\le n\le16$ & $4.0\cdot10^8$ & Thm.\ \ref{thm:census}(iii)\\
all shapes, $\max V\le M$ & $n=5(120),6(60),7(100),$ & &\\
 & $8(60),9(50),10(42),11(38)$ & $3.0\cdot10^9$ & nothing new\\
component lengths & $5\le n\le150$, $2r>n-1$ & $5548$ & Thm.\ \ref{thm:comp} confirmed\\
component lengths & $5\le n\le40$, $2r\le n-1$ & $342$ & Thm.\ \ref{thm:comp} confirmed\\
cross-check (B), (C) & $5\le n\le40$, all $r$ & $738$ & see \S\ref{sec:crosscheck}\\
\bottomrule
\end{tabular}
\caption{Exhaustive searches. In every case any $V$ with exact
$\LR(V)<\frac1n$ would have been reported; none occurred.}
\label{tab:comp}
\end{table}

\subsection{Independent cross-verification}\label{sec:crosscheck}
Because the classification statements rest on computation, every quantity was
recomputed along a second, independently implemented algorithmic path sharing
no code with the first.

\emph{(B) Wall subdivision.} The components of $U(n,r)$ were rebuilt by
collecting all \emph{walls} --- the rationals $t$ with $\norm{it}=\frac1n$ for
some surviving runner $i$ --- sorting them, testing the exact midpoint of each
elementary cell for membership in $U(n,r)$, and merging maximal runs of good
cells. No set intersection is involved. For all $738$ pairs with $5\le n\le40$
and $2\le r\le n-1$ the component sets agreed exactly with those produced by the
interval-intersection routine, and the resulting longest lengths and suprema
agreed with both cases of Theorem \ref{thm:comp} without exception.

\emph{(C) Adaptive branch and bound.} For a tight candidate $V$ the identity
$\LR(V)=\frac1n$ was re-certified without enumerating candidate maximisers, as
follows. On a cell $[a,b]$ with midpoint $m$ the Lipschitz estimate
\[
f_V(t)\ \le\ \min_{v\in V}\Bigl(\norm{vm}+v\,\tfrac{b-a}{2}\Bigr)
\qquad(t\in[a,b])
\]
holds; cells whose bound is below $\frac1n+\frac1{nQ}$, where $Q=2\max V$, are
discharged and the others bisected. Since $\LR(V)$ is a rational with
denominator at most $Q$ (Lemma \ref{lem:breakpoint}), $\LR(V)\neq\frac1n$ would
force $|\LR(V)-\frac1n|\ge\frac1{nQ}$; hence termination of the recursion
together with $f_V(\frac1n)=\frac1n$ proves $\LR(V)=\frac1n$. All eleven sets
of Table \ref{tab:census1} and the four sets of \eqref{eq:known} were certified
this way, using between $25$ and $1479$ cells.

\emph{(D) Second rational library.} A sample of values of $f_V$ was recomputed
with \texttt{sympy.Rational} in place of the standard \texttt{fractions}
module, with complete agreement.

The cross-verification was not merely confirmatory: an earlier version of
Theorem \ref{thm:comp} asserted the value $2s/(rnI)$ for the longest component
in the regime $2r>n-1$, which path (B) refuted at $(n,r)=(5,3)$ and which the
remark after Theorem \ref{thm:comp} now corrects. The statements used in
Theorems \ref{thm:bound} and \ref{thm:r23} lie in the regime $2r\le n-1$ and
were unaffected.

The unrestricted searches reproduce the known complete classifications for
$n=5$ \cite{CusickPomerance1984} and $n=6$ \cite{BohmanHolzmanKleitman2001},
which validates the implementation.

\section{Open problems}\label{sec:open}

\begin{question}
Complete the classification of single-speed modifications in the middle range
$n/6\le r\le(n-1)/2$, where Theorem \ref{thm:bound} is weakest. By
Theorem \ref{thm:census}(i) there are none with $n\le45$ and $w\le10n$ apart
from those already listed.
\end{question}

\begin{question}
Decide the converse of \cite[Thm.~3.1]{GoddynWong2006} in the primitive
setting: must a tight accelerated baseline satisfy the per-runner criterion of
Theorem \ref{thm:GW23}? Our search over pairs ($n\le80$, multipliers $\le5$)
found no violation. Note that Proposition \ref{prop:div} already forces each
deleted speed above $(n-1)/2$ to divide some inserted speed.
\end{question}

\begin{question}
Is $\{1,4,5,6,7,11,13\}$ the only tight set that is not an accelerated
baseline? By Theorem \ref{thm:census}(ii) it is the only two-speed
modification with $n\le34$ and inserted speeds $\le4n$.
\end{question}

\begin{question}
Find an explicit $B(n)$ with $\max V\le B(n)$ for every tight $V$.
Proposition \ref{prop:nowindow} rules out $B(n)=2n$, and Theorem
\ref{thm:GW23} produces tight sets with $\max V=mr$ for arbitrarily large $m$,
so $B$ must grow; together with Theorem \ref{thm:census} such a bound would
make the classification a finite computation for each $n$. Compare the
conditional finiteness of \cite{GiriKravitz2026}.
\end{question}

\section*{Data availability}
All programs and the complete output of every search reported in Table
\ref{tab:comp} are archived at
\begin{center}
\texttt{https://doi.org/10.5281/zenodo.21695561}
\end{center}
and mirrored at \texttt{https://github.com/Qinrayn/lonely-runner-tight}. The
repository contains the exact-arithmetic core (\texttt{ml.py},
\texttt{mu\_criterion.py}), the enumeration drivers, the independent
cross-verification of \S\ref{sec:crosscheck} (\texttt{verify\_independent.py}),
and a single script reproducing every table of this paper.

\end{document}